 \renewcommand{\ge}{\geqslant}
 \renewcommand{\le}{\leqslant}
  \newcommand{\x}{\mathbf{x}}
  \newcommand{\z}{\mathbf{z}}
  \newcommand{\0}{\mathbf{0}}
 \newcommand\bb{{\mathbf b}}
 \newcommand\bg{{\mathbf g}}
 \newcommand\bi{{\mathbf i}}
 \newcommand\bj{{\mathbf j}}
 \newcommand\bV{{\mathbf V}}
 \newcommand\cS{{\mathcal S}}
 \newcommand\cT{{\mathcal T}}
 \newcommand\rP{{\rm P}}
 \newcommand\rS{{\rm S}}
  \newcommand{\n}{\mathbf{n}}
  \newcommand{\trans}{^\top}
\newcommand{\C}{\mathbb{C}}
\newcommand{\F}{\mathbb{F}}
\newcommand{\G}{\mathbb{G}}
\newcommand{\N}{\mathbb{N}}
\newcommand{\Q}{\mathbb{Q}}
\newcommand{\Z}{\mathbb{Z}}
\newtheorem{theorem}{Theorem}
\newtheorem{proposition}[theorem]{Proposition}
\newtheorem{lemma}[theorem]{Lemma}
\theoremstyle{definition}
\theoremstyle{remark}
\newcommand{\rank}{\operatorname{rank}}
\newcommand{\srank}{\operatorname{srank}}
\newcommand\blfootnote[1]{%
  \begingroup
  \renewcommand\thefootnote{}\footnote{#1}%
  \addtocounter{footnote}{-1}%
  \endgroup
}
\begin{document}
\title{On the complexity of finding tensor ranks}
\author[1]{Mohsen Aliabadi \textsuperscript{*} \and Shmuel Friedland \textsuperscript{**}}

\blfootnote{$^{*}${Department of Mathematics, Iowa State University, Carver Hall, 411 Morrill Rd, Ames, IA 50011, USA, aliabadi@iastate.edu}}
\blfootnote{$^{**}${Corresponding author, Department of Mathematics, Statistics and Computer Science,  University of Illinois, 851 South Morgan Street, Chicago, Illinois 60607-7045, USA, friedlan@uic.edu}}


\begin{abstract}
The purpose of this note is to give a linear algebra algorithm to find out if a rank of a given tensor over a field $\F$ is at most $k$ over the algebraic closure of $\F$, where $k$ is a given positive integer.
We estimate the arithmetic complexity of our algorithm.
\end{abstract}
\maketitle
 \noindent {\bf 2010 Mathematics Subject Classification.} Primary 13P15, 15A03; Secondary 15A69, 65H10, 68Q25. 
 
 \noindent
 {\it Keywords and phrases}. Gauss elimination, homogeneous polynomial, NP-hardness, symmetric tensor, tensor rank.

\section{Introduction}\label{sec:intro}
In the last fifty years it became clear that multiarrays with more than two indices, known as tensors, are vital tools in data processing \cite{LC18}, mathematical biology \cite{BAPHA}, numerical linear algebra \cite{Str87}, quantum physics \cite{BFZ}, theoretical computer science \cite{BCS97} and theoretical mathematics \cite{La12}.
Formally, we define a $d$-tensor, as a multiarray with $d\ge 3$ indices. 
One of the simple criterion of the complexity of a given tensor is its rank. 
(The concept of tensor rank has been introduced in the early 20th century \cite{Hit27}.)
Recall that the rank of a nonzero tensor is the minimum number of terms in a decomposition of tensor as a sum of rank-one tensors.
The famous Strassen algorithm for multiplication of two matrices of order two using seven multiplication \cite{Str69} and not less, is equivalent to the statement that the corresponding $4 \times 4\times 4$ tensor has rank seven.  The 
3-satisfiability problem with $n$ variables and $m$ clauses can be stated if a given $3$-tensor has a specific rank \cite{Has90}.  This result yields that the computation of the rank of tensor over any finite field is NP-complete, and is NP-hard over fields of rational, real and complex numbers.  
 
On the other hand, the rank of a matrix is a well-understood notion, which has many equivalent  definitions.  The computation of the rank of matrix is usually obtained by applying the Gaussian elimination process: Namely, it is the number of non-zero rows in the row echelon form obtained from the Gaussian elimination process.
See \cite{FA18} for classical results on matrix rank.

The point of this note is the following statement.  Suppose that $\cT$ is a $d$-tensor
over a given field $\F$.   Denote its rank  by   rank$_\F \cT$.  Assume that $\G$ is an extension field of $\F$.  Then rank$_\G \cT\le$rank$_\F\cT$, and strict inequality may hold \cite{FL18}.  Denote by $\hat \F$ the algebraic closure of $\F$.  
Let $k\ge 2$ be an integer.  Then rank$_{\hat \F}\cT>k$ if and only if certain systems of linear equations are solvable over $\F$.  Since rank$_{\hat F}\cT$ is NP-hard to compute one expects the linear system is exponential in number of variables.  
Our techniques also apply to symmetric tensors and their symmetric rank \cite{CGLM}.
Our main result is a consequence of an effective Nullstellensatz \cite{Kol}.

The main drawback of our approach is a huge number of variables and equations that one encounters in trying to apply an effective Hilbert Nullstellensatz. We hope that our approach can be further improved to a smaller number of variables and equations for the specific problem of tensor rank. 

Over finite fields, one can study directly the problem of determining  if the rank of a given tensor is at most $k$.  A recent paper \cite{LPTWY} gives a probabilistic algorithm to find solution of polynomial equations over a finite field with high probability.  
We will compare the complexity of our method with the above result.

\section{Preliminary results}\label{sec:prel}
Let $\F$ be a field and $d\in\N$.   Denote: 
\begin{eqnarray*}
[d]=\{1,\ldots,d\},\n=(n_1,\ldots,n_d)\in\N^d,  [\n]=[n_1]\times \cdots\times [n_d], N(\n)=\prod_{j=1}^d n_j, L(\n)=\sum_{j=1}^d n_i, \F^{\n}=\otimes_{j=1}^d \F^{n_j}.  
\end{eqnarray*}
The entries of $\cT\in\F^{\n}$ are denoted by $\cT_{i_1,\ldots,i_d}, i_j\in [n_j],j\in[d]$, and  we view $\cT$ as $[\cT_{i_1,\ldots,i_d}]$.  Then $\cT$ is a matrix for $d=2$ and  a tensor for $d\ge 3$.  A tensor $\cT\in\F^{\n}\setminus\{0\}$ is called a  {rank-one} tensor if $\cT=\otimes_{j=1}^d \x_j$ for $\x_j\in\F^{n_j}\setminus\{\0\}, j\in[d]$.  Recall that for a tensor $\cT\in\F^{\n}\setminus\{0\}$ the rank of 
$\cT$ is the minimal number of terms in the decomposition $\cT=\sum_{i=1}^r \otimes_{j=1}^d \x_{j,i}$.  (The rank of zero tensor is zero.)

The unfolded rank of 
$\cT\in\F^{\n}$ in mode $j\in[d]$  denoted as $r_j(\cT)$, is defined as follows:  For simplicity of exposition, let us explain the notion $r_1(\cT)$.  View tensor $\cT$ as a matrix  $T\in\F^{n_1}\otimes(\otimes_{j=2}^d \F^{n_j})$.  Then $r_1(\cT)$ is $\rank T$.  We denote by $\bg_{1,1},\ldots,\bg_{r_1(\cT),1}$  a column basis of $T$.   Clearly, $r_1(\cT)\le n_1$.  Let $\bV_1=$\ span$(\bg_{1,1},\ldots,\bg_{r_1(\cT),1})\subseteq \F^{n_1}$.   Similarly we define the rank $r_j(\cT)$, the column space $\bV_j\subseteq \F^{n_j}$ and a basis $\bg_{1,j},\ldots,\bg_{r_j(\cT),j}$ in $\bV_j$.  It is known that for $d\ge 3$ it is possible that all $r_j(\cT), j\in[d]$ are different.
Observe that $\cT$ can be viewed as a tensor $\otimes_{j=1}^d \bV_j$.  Introduce a new basis in $\F^{n_j}$, such that a basis of $\bV_j$ is part of this basis. Hence we can convert the tensor $\cT$  {to} a ``smaller'' tensor $\cT'\in \F^{\mathbf{r}}, \mathbf{r}=(r_1(\cT),\ldots,r_d(\cT))$.    
For simplicity of  the exposition we assume
\begin{eqnarray}\label{normal1}
1\le n_1\le \cdots\le n_d, \; r_j(\cT)=n_j, \textrm{ for }j\in[d].
\end{eqnarray}
It is well-known that $n_d\le \rank \cT\le \prod_{i=1}^{d-1}n_i$ \cite{Fri12}.
Thus we are going to assume  {that}
\begin{eqnarray}\label{norm2}
n_d\le r\le \prod_{i=1}^{d-1}n_i.
\end{eqnarray}
Clearly, $\rank\cT>r$ if and only if the system 
\begin{eqnarray}\label{poleqgenten1}
\sum_{i=1}^r \otimes_{j=1}^d \x_{j,i}-\cT=0
\end{eqnarray}
is not solvable over $\F$.

Assume that $n_1=\cdots=n_d=n$.  Denote $n^{\times d}:=\n$.  A tensor $\cS\in \F^{n^{\times d}}$ is called symmetric if $\cS_{i_1,\ldots,i_d}=\cS_{i_{\sigma(1)},\ldots,i_{\sigma(d)}}$ for each $i_1,\ldots,i_d\in [n]$ and each bijection $\sigma:[d]\to[d]$.  We denote by $\rS^d\F^n\subset \F^{n^{\times d}}$ the subspace of symmetric tensors. It turns out  that $\dim\rS^d\F^n={n+d-1\choose d}$.  As $\rS^d\F^n\subset \F^{n^{\times d}}$ it follows that ${n+d-1\choose d}\le n^d$.
It is  {well}-known that a symmetric tensor has rank one if $\cS=a\otimes^d\x$, where $a\in\F\setminus\{0\}, \x\in\F^n\setminus\{\0\}$.  Also if $\F$ has at least $d$ elements then each $\cS\in\rS^d\F^n$ is a sum of  {rank-one} symmetric tensors \cite[Proposition 7.2]{FS13}.  (It is shown in \cite[Proposition 7.1]{FS13} that for a fixed finite field $\F$ and $n\ge 2$ there exist  symmetric tensors which are not sum of  {rank-one} symmetric tensors for  sufficiently large $d$.)  In  {the following passage}, we assume that $|\F|\ge d$.   {We define $\srank \cS,$ the symmetric rank of $\cS\in \rS^d\F^n\setminus\{0\}$, as the minimal number in the decomposition of $\cS$ as a sum of {rank-one} symmetric tensors.}  For matrices over a field of characteristic $\ne 2$ the symmetric rank of $\cS\in\rS^2\F^n$ is equal to the (standard) rank of $\cS,$  {whereas for} $d\ge 3$ there are examples of $3$-symmetric tensors whose symmetric rank is greater than their tensor rank \cite{Shi17}.
Observe that for a symmetric $\cS\in \rS^d\F^n$ one has the equality $r_j(\cS)=r(\cS)$ for each $j\in[d]$.  In this case we assume that $n=r_1(\cS)$ and $n\le r$.
\\
\indent
Let $f(\x)$ be a homogeneous polynomial of degree $d$ in $n$ variables:
\begin{eqnarray*}
&&f(\x)=\sum_{ j_k+1\in [d+1],k\in[n], j_1+\cdots +j_n=d} \frac{d!}{j_1!\cdots j_n!} f_{j_1,\ldots,j_n} x_1^{j_1}\cdots x_n^{j_n}=\\
&&\sum_{\bj\in J(d,n)} c(\bj)f_{\bj}\x^{\bj}, \quad
\x^{\bj}=x_1^{j_1}\cdots x_n^{j_n}, c(\bj)=\frac{d!}{j_1!\cdots j_n!},f_{\bj}=f_{j_1,\ldots,j_n},\\
&&J(d,n)= \{\bj=(j_1,\ldots,j_n)\in\Z_+^n,\; j_1+\cdots+j_n=d\}.
\end{eqnarray*}
Conversely, a homogeneous polynomial $f(\x)$ of degree $d$ in $n$ variables defines a unique symmetric $\cS\in\rS^d\F^n$, which is given by the scalar product of the symmetric tensors $\cS$ and $\otimes^d \x$.  (See part (4) of Lemma 1 in\cite{FW18}.)   Denote by $\rP(d,n,\F)$ the space of all homogeneous polynomials of degree $d$ in $n$ variables over $\F$.

A polynomial in $n$ variables of degree at most $d$ has the representation
\begin{equation}\label{defpolfx}
f(\x)=\sum_{ j_k+1\in [d+1],k\in[n], j_1+\cdots +j_n\le d} \frac{d!}{j_1!\cdots j_n!} f_{j_1,\ldots,j_n} x_1^{j_1}\cdots x_n^{j_n}.
\end{equation}
Let $\x'=(x_1,\ldots,x_{n+1})=(\x,x_{n+1})$.  Then $f(\x)$ of the above form is $g(\x,1)$ for a unique $g(\x')\in \rP(d,n+1,\F)$.

Assume that $\G$ is an extension field of $\F$.  Given $\cT\in\F^{\n}\setminus{0}$, one can ask what  the rank of $\cT$ over $\G$ is?  That is,  what is the minimum number of terms in a decomposition of $\cT$ as a sum of rank-one tensor, where each rank-one tensor is in $\G^{\n}$.   We denote this rank by rank$_\G\cT$.  (When no ambiguity arises we denote rank$_\F\cT$ by $\rank\cT$.)  Clearly, rank$_\G\cT\le$rank$_\F\cT$.  It is well-known that in some cases strict inequality holds \cite{FL18}.   Similar results hold for symmetric rank of symmetric tensors.
\section{Outline of our approach}\label{sec:approach}
It is well-understood that matrices are closely related to linear transformations, while tensors are closely related to polynomial maps \cite{Fri12}.   More precisely,
as  {we have mentioned,} it is a classical result that the rank of a matrix is polynomially computable, using the Gauss elimination.  
 {In contrast to matrix rank, the tensor rank is unfortunately NP-hard to compute, as proven by Hastad \cite{Has90}.} Later Schaefer and Stefankovic \cite{SS18}  {showed} that  { determining the rank of a tensor over a field has the same complexity as deciding the existential theory of the field, which implies Hastad's NP-hardness results.}  Another result of Hastad states that the rank  decomposition problem is NP-complete  in the case of finite fields. Recently, Shitov \cite{Shi16} showed that rank over a field $\F$ is complete for the existential theory of $\F$ and is also uncomputable over $\Z$. 
 
{Let $\F$ be a given field and $\hat\F$ be its algebraic closure. Let $d\ge 3.$ We aim to find the upper bounds for the bit complexities for the following problems: 
\begin{enumerate}
\item For a tensor $\cT\in \F^{\n},$ to determine if $\rank \cT$ over the field $\hat \F$ is $\le r,$ for a fixed integer $r \ge 2;$
\item For a symmetric tensor $\cS\in\rS^d\F^n$, to determine if $\srank \cS$ over $\hat \F$ is $\le r,$ for a fixed integer $r \ge 2$.
\end{enumerate}
}

We start with the following obvious lemma:
\begin{lemma}\label{poleqgenten}  Let $3 \le d$ and $2\le n_1\le \ldots\le n_d$ be integers.  Assume that $\cT\in \F^{\n}$.  Then $r<\rank \cT$ if and only if the system of 
$N(\n)$ polynomial equations \eqref{poleqgenten1}
in $rd$ vector variables $\x_{1,1},\ldots, \x_{d,1},\ldots,\x_{1,r},\ldots,\x_{d,r}$, with a total number of $rL(\n)$ variables, of degree $d$ is not solvable. 
\end{lemma}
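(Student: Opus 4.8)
The plan is to unwind the definition of tensor rank and then tally the data of the resulting polynomial system. By definition $\rank\cT$ is the least $r$ for which $\cT$ admits a decomposition $\cT=\sum_{i=1}^r\otimes_{j=1}^d\x_{j,i}$ with $\x_{j,i}\in\F^{n_j}$. First I would record the equivalence that $\rank\cT\le r$ holds exactly when \eqref{poleqgenten1} is solvable over $\F$. One direction pads a minimal decomposition of $\cT$ into $\rank\cT\le r$ rank-one tensors by setting the remaining $\x_{j,i}$ to $\0$, producing a solution of \eqref{poleqgenten1}; the other direction discards any vanishing summands from a solution to exhibit $\cT$ as a sum of at most $r$ rank-one tensors. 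Taking the contrapositive yields $r<\rank\cT$ if and only if \eqref{poleqgenten1} is not solvable, which is the asserted equivalence.

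It then remains to verify the bookkeeping. Since \eqref{poleqgenten1} is an identity between tensors in $\F^{\n}$, reading it entrywise gives one scalar equation for each index tuple in $[\n]$, hence $N(\n)=\prod_{j=1}^d n_j$ polynomial equations. The unknowns are the $rd$ vectors $\x_{j,i}$ with $j\in[d]$, $i\in[r]$; as each $\x_{j,i}\in\F^{n_j}$ contributes $n_j$ scalar coordinates, the total is $\sum_{i=1}^r\sum_{j=1}^d n_j=rL(\n)$ scalar variables. Finally, in any fixed entry each summand $\otimes_{j=1}^d\x_{j,i}$ is a product of exactly one coordinate drawn from each of the $d$ distinct vectors $\x_{1,i},\ldots,\x_{d,i}$, so every scalar equation is homogeneous of degree $d$ in the unknowns.

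I expect no genuine obstacle: as the authors note, the statement is a direct restatement of the definition of rank as the solvability of a structured polynomial system, and it merely makes precise the remark already recorded around \eqref{poleqgenten1}. The only point requiring a moment's care is the zero-padding argument in the first step, which is needed because rank-one tensors are by convention nonzero while the variables in \eqref{poleqgenten1} range freely over $\F^{n_j}$; once this is observed, the entire content lies in the accurate counting of the $N(\n)$ equations, the $rL(\n)$ variables, and the degree $d$.
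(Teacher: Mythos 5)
Your proposal is correct and matches the paper's intent exactly: the paper states this as an ``obvious lemma'' with no written proof, having already recorded the equivalence of $\rank\cT>r$ with unsolvability of \eqref{poleqgenten1} in \S\ref{sec:prel}, and your unwinding of the definition plus the zero-padding observation and the count of equations, variables, and degree is precisely the verification the authors leave implicit.
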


Note  that to decide the rank of $3$-mode tensor $\cT$ over $\C$ is an NP-hard problem  while deciding the rank of $3$-mode tensor over $\F=\Z/(p\Z)$ is an NP-complete problem. 

Over an algebraically closed field, this statement is equivalent to the fact that the ideal generated by $N(\n)$ polynomials which are entries of the left-hand side of \eqref{poleqgenten1} contains the constant function $1$. 
Thus for a given $r\ge n_d$ the system \eqref{poleqgenten1} reduces to $N(\n)$ equations, with $M(r)=rL(\n)$ variables.  Note that $M(r,\n)\le dr^2$ variables. For simplicity of the exposition we are going to assume that $r$ is small enough so that the number of variables is  less than or equal to  the number of equations:
\begin{eqnarray*}
M(r,\n)=rL(\n)\le N(\n).
\end{eqnarray*}
We now recall an efficient version of Hilbert Nullstellensatz \cite[1.9. Corollary]{Kol}. 
(See \cite{Bro87,Bro,EL99} for recent improvements on Hilbert's Nullstellensatz.)
 Assume first that $\F=\C$.  Denote by $\Z[\bi]$ the Gaussian integers.
 
\begin{lemma}\label{solvlem}
 Let $\cT\in\Z[\bi]^{\n}$ be given.  Then the complexity of deciding if $\rank \cT> r$ is at most the complexity  of finding if the linear system in the coefficients of polynomials $g_1,\ldots,g_{N(\n)}$
\begin{eqnarray}\label{Bezeq}
&&\sum_{i=1}^{N(\n)}g_i(\z)f_i(\z)=1,\\ 
&&g_i(\z)\in \C[\C^{M(r,\n)}], \; \deg g_i\le d^{M(r,\n)-1}, \;i\in [N(\n)],
\notag
\end{eqnarray} 
is solvable over $\Q[\bi]$ in precise arithmetic.   
\end{lemma}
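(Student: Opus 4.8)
The plan is to chain together Lemma~\ref{poleqgenten}, the weak form of Hilbert's Nullstellensatz, and the effective (degree-bounded) version of Koll\'ar \cite{Kol}, and then to observe that once the degrees of the $g_i$ are capped the search for a Nullstellensatz certificate becomes a finite linear-algebra problem whose solvability is insensitive to the ambient field. First I would work over $\C$ and name the data explicitly: let $f_1,\ldots,f_{N(\n)}$ be the $N(\n)$ entries of the left-hand side of \eqref{poleqgenten1}, each a polynomial of degree $d$ in the $M(r,\n)$ scalar variables $\z$ assembled from $\x_{1,1},\ldots,\x_{d,r}$. Since $\cT\in\Z[\bi]^{\n}$ and each rank-one product $\otimes_{j=1}^d\x_{j,i}$ contributes only coefficients $\pm 1$, every $f_i$ has coefficients in $\Z[\bi]$. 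By Lemma~\ref{poleqgenten}, $r<\rank\cT$ is equivalent to the $f_i$ having no common zero in $\C^{M(r,\n)}$, which by the weak Nullstellensatz is equivalent to $1$ lying in the ideal $(f_1,\ldots,f_{N(\n)})$, i.e. to the existence of polynomials $g_i$ with $\sum_i g_i f_i=1$.

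Next I would bound the degree of a certificate. Because the $f_i$ have degree $d$ in $M(r,\n)$ variables, the cited corollary of Koll\'ar guarantees that whenever such a representation exists, one exists with $\deg g_i\le d^{M(r,\n)-1}$ for all $i$; the reverse implication is trivial. Hence $r<\rank\cT$ is equivalent to the solvability of \eqref{Bezeq} under this degree cap. The key observation is that, with the degrees fixed, each $g_i$ is an unknown linear combination of the finitely many monomials of degree at most $d^{M(r,\n)-1}$, and the identity $\sum_i g_i f_i=1$ — obtained by equating the coefficient of every monomial on the two sides — is a \emph{linear} system in the unknown coefficients of the $g_i$. Since the $f_i$ have coefficients in $\Z[\bi]$, all coefficients of this linear system lie in $\Q[\bi]$.

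Finally I would descend the field. Writing the system produced above as $A\xi=\be$ with $A,\be$ over $\Q[\bi]$, solvability of a linear system over a field is governed by the rank condition $\rank A=\rank[A\mid\be]$, which is unchanged under the extension $\Q[\bi]\subseteq\C$. Therefore \eqref{Bezeq} is solvable over $\C$ if and only if it is solvable over $\Q[\bi]$. Combining this with the chain of equivalences above yields that $r<\rank\cT$ holds if and only if \eqref{Bezeq} is solvable over $\Q[\bi]$, so deciding the latter by exact Gaussian elimination over $\Q[\bi]$ dominates the cost of deciding $\rank\cT>r$, which is the asserted bound.

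I expect the one genuinely delicate point to be this last descent: the Nullstellensatz certificate is a priori guaranteed only over the algebraically closed field $\C$, and the whole reduction hinges on the fact that capping the degrees converts the search for a certificate into a finite linear problem whose solvability cannot be affected by enlarging the coefficient field. The degree bound itself is imported wholesale from \cite{Kol}, so no work is needed there beyond checking that the $f_i$ have degree $d$ and that the variable count is $M(r,\n)$; the remaining bookkeeping (counting the monomials of bounded degree and verifying that the coefficient field is $\Q[\bi]$) is routine.
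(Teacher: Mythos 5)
Your proposal follows essentially the same route as the paper's proof: reduce via Lemma~\ref{poleqgenten} to unsolvability of \eqref{poleqgenten1}, invoke the weak Nullstellensatz over $\C$, cap the degrees of the $g_i$ by Koll\'ar's effective bound, and equate coefficients to obtain a linear system in the unknown coefficients of the $g_i$. Your explicit Kronecker--Capelli argument for descending solvability from $\C$ to $\Q[\bi]$ is a point the paper's proof leaves implicit, and it is a correct and welcome addition rather than a divergence.
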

\begin{proof}  Lemma \ref{poleqgenten} yields that the system of 
$N(\n)$ polynomial equations \eqref{poleqgenten1} is not solvable if and only if $\rank \cT>r$.  As $\C$ is algebraically closed the assumption that $\rank \cT>r$ yields that there exist $N(\n)$ polynomials $g_1,\ldots,g_{N(\n)}$ such that the system \eqref{Bezeq} is solvable.   The efficient version of Hilbert Nullstellensatz \cite[1.9. Corollary]{Kol} yields that the degree of each $g_i$ is at most $d^{M(r,\n)-1}$.
Write each $g_i(\z)$ in the form \eqref{defpolfx} of degree $d^{M(r,\n)-1}$, where the monomial coefficients of each $g_i(\z)$ are unknown variables.  Then the existence of $g_1,\ldots, g_{N(\n)}$ of degrees at most $d^{M(r,\n)-1}$ that satisfy \eqref{Bezeq}  is equivalent to the solvability of the system of linear equations in the monomial coefficients of each $g_i(\z)$ induced by \eqref{Bezeq}.
\end{proof}

 Thus for a fixed $r$ the complexity of determining the solvability of this system of linear equations is as follows. 
 
One can view a polynomial $p(\z)$ of degree $ d^{M(r,\n)-1}$ in $M(r,\n)$ variables as a homogeneous polynomial of degree  $ d^{M(r,\n)-1}$ with $M(r,\n)+1$ variables, where the variable $x_0$ has value $1$.  Hence the number of monomials appearing in $p(\z)$ is ${M(r,\n)+d^{M(r,\n)-1}\choose d^{M(r,\n)-1}}={M(r,\n)+d^{M(r,\n)-1}\choose M(r,\n)} $.
As we observed, 
\begin{eqnarray*}
&&{M(r,\n)+d^{M(r,\n)-1}\choose M(r,\n)} = {(d^{(M(r,\n)-1}+1)+M(r,\n)-1\choose M(r,\n)}\le (d^{(M(r,\n)-1}+1)^{M(r,\n)}=\\
&&(1+d^{-(M(r,\n)-1)})^{M(r,\n)} d^{(M(r,\n)-1)M(r,\n)}\le(1+1/M(r,\n))^{M(r,\n)}d^{(M(r,\n)-1)M(r,\n)}\le ed^{(M(r,\n)-1)M(r,\n)}.
\end{eqnarray*}
Here $e=\lim_{m\to\infty}(1+1/m)^m=2.718\ldots$.
Hence the total number of coefficients of monomials in each 
$g_i(z)$ is bounded above by $ed^{(M(r,\n)-1)M(r,\n)}$.  We call these coefficients linear variables.
 Thus the total number of linear variables is bounded above is $e N(\n)d^{M(r,\n)(M(r,\n)-1)}$.  The number of equations is the number of monomials which is bounded above by $ed^{M(r,\n)(M(r,\n)-1)}$.   Thus if we use Gauss elimination to determine if this system of linear equations is solvable or not we need $O(N(\n)d^{3M(r,\n)(M(r,n)-1)})$ flops.  Ignoring the factor $N(\n)$, we will need  $O(d^{3M(r,\n)(M(r,n)-1)}))$ flops.  To estimate the computational complexity we also need to take into account the storage space in terms of the entries of $\cT$.  This is done in the next section.
 
It seems that in some cases it would be beneficial to reduce the number of variables as follows.  Note that the number  of variables for rank-one tensor can $\x_1\otimes \cdots\otimes \x_d$ is $-d+1+\sum_{i=1}^d n_i$. Indeed we can always assume that for $i<d$ one of the coordinates of $\x_i$ is $1$.  However we do not know which coordinate is $1$.  So we need to choose the place of this coordinate.  There are $n_i$ choices.  Hence we need to take $N(\n')=n_1\cdots n_{d-1}$ choices  for each rank-one tensor.  (Here $\n'=(n_1,\ldots, n_{d-1})$.)  Thus for rank $r$ we have $N(\n')^r$ choices to consider.  Hence we can replace our complexity estimate $O(d^{3M(r,\n)(M(r,n)-1)})$ by $O(N(\n')^rd^{3(M(r,\n)-d+1)(M(r,n)-d)})$.

Assume that $\G$ is algebraically closed.  A rank-one symmetric tensor in $\rS^d\G^n$
is of the form $\x^{\otimes d}=\x\otimes\cdots\otimes\x, \x\in\G^n$.  A Waring decomposition of 
$\cS\in \rS^d\G^n\setminus\{0\}$ is $\cS=\sum_{i=1}^r \x_i^{\otimes d}$ \cite{aa}.   Since every algebraically closed field has an infinite number of elements it follows that every symmetric tensor has a Waring decomposition  \cite{FS13}.  The minimal number of rank-one symmetric tensors in the decomposition of $\cS$  is called a symmetric rank and is denoted as $\srank\cS$.  Clearly, $\rank\cS\le \srank \cS$.   It is shown in \cite{Fri16,ZHQ16} that in certain cases one has equality $\rank\cS=\srank\cS$.  However, even for $d=3$ one can have an inequality $\rank\cS<\srank\cS$ \cite{Shi17}.

Hence an analog of Lemma \ref{poleqgenten} is:
\begin{lemma}\label{poleqsymten}  Let $3 \le d$ and $2\le n$ be integers.  Assume that $\cS\in \G^{\n}$, where $\G$ is an algebraically closed field.  Then $r<\srank \cS$ if and only if the following system of 
$n+d-1\choose d$ polynomial equations 
\begin{eqnarray}\label{poleqsymten1}
\sum_{i=1}^r \x_i^{\otimes d}-\cS=0
\end{eqnarray}
is not solvable.
\end{lemma}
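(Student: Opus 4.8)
The plan is to mirror the argument establishing Lemma~\ref{poleqgenten}, since Lemma~\ref{poleqsymten} is simply its specialization to symmetric tensors and the Waring decomposition. First I would recall the definition of symmetric rank over the algebraically closed field $\G$: by the discussion preceding the statement, every $\cS\in\rS^d\G^n\setminus\{0\}$ admits a Waring decomposition $\cS=\sum_{i=1}^r\x_i^{\otimes d}$, and $\srank\cS$ is the least $r$ for which such a decomposition exists. Thus $\srank\cS$ is precisely the smallest $r$ making \eqref{poleqsymten1} solvable over $\G$ in the $r$ vector variables $\x_1,\ldots,\x_r\in\G^n$.

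The core of the proof is then a direct logical equivalence. I would argue that $r<\srank\cS$ holds if and only if no collection $\x_1,\ldots,\x_r$ satisfies $\sum_{i=1}^r\x_i^{\otimes d}=\cS$, which is exactly the assertion that the system \eqref{poleqsymten1} is not solvable. The forward direction is immediate: if $r<\srank\cS$, then by minimality of $\srank\cS$ no Waring decomposition with $r$ terms exists, so \eqref{poleqsymten1} has no solution. For the converse, if \eqref{poleqsymten1} is unsolvable then $\cS$ cannot be written as a sum of $r$ rank-one symmetric tensors, whence every valid decomposition uses strictly more than $r$ terms and $\srank\cS>r$, i.e.\ $r<\srank\cS$.

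It remains only to count the equations. Each entry $\cS_{i_1,\ldots,i_d}$ of the symmetric tensor gives one scalar polynomial equation, but because $\cS$ is symmetric the independent entries are indexed by unordered multisets, equivalently by $\bj\in J(d,n)$, and $|J(d,n)|={n+d-1\choose d}=\dim\rS^d\G^n$. Hence \eqref{poleqsymten1} is genuinely a system of ${n+d-1\choose d}$ equations rather than $n^d$, since imposing equality on the symmetric representatives forces equality of all entries. Each such equation is homogeneous of degree $d$ in the entries of $\x_1,\ldots,\x_r$.

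The argument is essentially tautological once the Waring framework is in place, so I do not anticipate a serious obstacle; the only point requiring care is the equation count. The main thing to justify is that solvability of \eqref{poleqsymten1} may be tested on the ${n+d-1\choose d}$ symmetric coordinates alone, which relies on the fact that both $\cS$ and each $\x_i^{\otimes d}$ lie in $\rS^d\G^n$, so their difference is symmetric and vanishes everywhere exactly when it vanishes on a basis of $\rS^d\G^n$.
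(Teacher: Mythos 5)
Your proposal is correct and follows the same route the paper intends: the paper states this lemma without proof as the ``analog of Lemma~\ref{poleqgenten}'', treating it as an immediate consequence of the definition of symmetric rank via Waring decompositions over an algebraically closed field, which is exactly the tautological equivalence you spell out. Your additional care in justifying the equation count ${n+d-1\choose d}$ (restricting to the independent symmetric coordinates $1\le i_1\le\cdots\le i_d\le n$) matches the discussion the paper gives immediately after the lemma and is the only nontrivial point.
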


Assume that $\cS\in\rS^d\F^{n}$.
Thus $\srank \cS>r$ over $\hat \F$ if the above system is not solvable over $\hat \F$.
Let $f_{i_1,\ldots,i_d}(\x_1,\ldots,\x_r)$ be the left hand side of $(i_1,\ldots,i_d)\in[n]^d$ entry.  Since we are dealing with symmetric tensors we can assume that $1\le i_1\le \cdots\le i_d\le n$.  Hence the  {unsolvability} of \eqref{poleqsymten1} is equivalent to
\begin{eqnarray}\label{Bezeq1}
\sum_{(1\le i_1\le\cdots\le i_d\le n} g_{i_1,\ldots,i_d}(\x_1,\ldots,\x_r)f_{i_1,\ldots,i_d}(\x_1,\ldots,\x_r)=1.
\end{eqnarray}
Note that the total number of variables is $rn$.  The efficient Nullstellensatz \cite{Kol} gives an upper bound on $\deg g_{i_1,\ldots,i_d}f_{i_1,\ldots,i_d}\le d^{rn}$.  The arguments above yield that the number of monomials of degree at most $d_i=\deg  g_{i_1,\ldots,i_d}$ is less than $ed^{(rn)(rn-1)}$.  Viewing the coefficients of $g_{i_1,\ldots,i_d}$ as linear variables we deduce that the total number of linear variables in all $g_{i_1,\ldots,i_d}$ is at most ${n+d-1\choose  d}d^{(rn)(rn-1)}$.  In \S\ref{sec:prel} we showed that ${n+d-1\choose  d}\le n^d$.  Observe next 
\begin{eqnarray*}
{n+d-1\choose  d}={n+d-1\choose  n-1}={d+1+(n-1)-1\choose  n-1}\le (d+1)^{n-1}.
\end{eqnarray*}
Hence the number of linear variables is bounded above by $O(\min(n^d,(d+1)^{n-1})d^{(nr)(nr-1)})$.
The number of equations is as the number of monomials which is bounded above by $d^{(nr)(nr-1)}$.
Thus if we use Gauss elimination to determine if this system of linear equations is solvable or not, we need $O({n+d-1\choose  d}d^{3(nr)(nr-1)})$ flops.

 A complementary question is, suppose that we know that rank$_{\hat F}\cT=r$ using the above approach.   Does  the complexity of finding its rank decomposition in some explicit way  have  roughly the same complexity as finding that rank$_{\hat F}\cT=r$?
 This is a much harder problem discussed   in \cite{FW18}.
 In \cite{Nie17} Nie gives an algorithm on  finding  solvability of such a decomposition for symmetric tensor over $\C$.  No complexity analysis is investigated though.

\section{Complexity of solvability of linear systems over integers}
We provide a  simple complexity result on the solvability   of nonhomogeneous linear system of equations 
\begin{equation}\label{syslineq}
A\x=\bb, \quad A\in\Q^{m\times n},\,\bb\in \Q^m\setminus\{\0\}.  
\end{equation}
We represent each rational number by $p/q$, where $p\in\Z$ and $q\in \N$.
We do not assume that  $p,q$ are coprime.  The storage for $p/q$, can be also written as $(p,q)$ is 
$$h(p/q)=\lceil \log_2 q\rceil +\max(1,\lceil\log_2(2|p|)\rceil).$$
We denote by $H$ the maximum  {height} of the augmented matrix $\hat A=[A\,\bb]$.

The system is solvable if and only if one does not have pivots in the last column of $\hat A$.  Equivalently,    the Kronecker-Capelli theorem  claims that the system is solvable if and only if $\rank A=\rank \hat A$.
 The number of operations that one needs  is $O(mn^2)$.  However if $n> m$, then one can compute $\rank A$ and $\rank \hat A$ using the equalities $\rank A=\rank A\trans$ and $\rank \hat A=\rank \hat A\trans$.  This will give the number of operations  $O(nm^2)$.  
Hence the number of flops we need to carry out is $O(\min(mn^2,nm^2))$. 
We need to address the storage of the entries, whose  $h$ function is growing when we  perform the Gauss elimination.  

Recall the complexity of computation of the product of two positive integers $p$ and $q$.  The standard algorithm would take $O(h(p)h(q))$.   However, there are better algorithms: Karatsuba algorithm, Toom-Cook multiplication algorithm and Schonhage-Strassen algorithm.  Basically, if one assumes the Schonhage-Strassen algorithm, it follows that the number of operations for the product of two numbers of height at most $H$ is $O(H\log H\log\log H)$.  For simplicity of notation we will ignore the logarithmic factors by denoting the complexity of the Schonhage-Strassen algorithm as $O(H)$.

  \begin{proposition}\label{cmplexsolvlinsys}  Consider the system of linear equations \eqref{syslineq}.  Then the complexity of determining the solvability of this system is   $O(\max(m,n)\min(m,n)^4H) $, where the logarithmic terms taking into account  the Schonhage-Strassen algorithm are suppressed.  If $A$ and $\bb$ are integers then the complexity of determining the solvability of this system is   $O(\max(m,n)\min(m,n)^3(H+\log_2\min(m,n))) $.
 \end{proposition}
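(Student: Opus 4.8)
The plan is to split the whole estimate into two independent factors, the number of arithmetic operations and the bit-cost of a single operation, the latter being governed entirely by the bit-length of the quantities produced during the elimination. By the Kronecker--Capelli criterion recalled above, the system \eqref{syslineq} is solvable if and only if $\rank A=\rank\hat A$, so it suffices to bring both $A$ and $\hat A=[A\,\bb]$ to row-echelon form and compare the numbers of pivots. By the discussion preceding the proposition (working with $A$ or with $A\trans$, whichever is cheaper), the number of field operations needed is $O(\max(m,n)\min(m,n)^2)$, and adjoining one column to form $\hat A$ does not change this order. Writing $s=\min(m,n)$, the task therefore reduces to bounding the bit-length of the numbers manipulated during the elimination and multiplying that bound by $O(\max(m,n)s^2)$.

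The main obstacle is precisely this bit-length control: naive Gaussian elimination over $\Q$ can produce intermediate entries whose size grows exponentially in the number of steps. I would circumvent this using the Bareiss (fraction-free) scheme, whose arithmetic-operation count matches that of ordinary elimination up to a constant factor, and whose key feature is that every quantity it produces is, up to sign, a $k\times k$ minor of the original matrix with $k\le s$ in the integer case, and a quotient of two such minors in the rational case (via the Bareiss--Sylvester identity, which holds over the field $\Q$). Hadamard's inequality then supplies all the required size bounds.

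For the integer case, a $k\times k$ minor of a matrix whose entries have height at most $H$ satisfies $|\det|\le k^{k/2}2^{kH}$, hence has height $O(s(H+\log_2 s))$. Each Bareiss update is a bounded number of multiplications and one exact division of integers of this height, costing $O(s(H+\log_2 s))$ under the Schonhage--Strassen bound with logarithmic factors suppressed. Multiplying by the operation count $O(\max(m,n)s^2)$ yields $O(\max(m,n)s^3(H+\log_2 s))$, which is the asserted integer bound.

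For the rational case, write each entry as $p_{ij}/q_{ij}$ with $h(p_{ij}/q_{ij})\le H$. A $k\times k$ rational minor, put over the common denominator of its $k^2\le s^2$ entries, has denominator of height at most $s^2H$; its numerator equals that minor times this denominator, of magnitude at most $k^{k/2}2^{kH}\cdot 2^{s^2H}$ by Hadamard, hence of height $O(s^2H)$. Thus every intermediate rational, being a quotient of two such minors, has height $O(s^2H)$, and each rational arithmetic operation costs $O(s^2H)$ under Schonhage--Strassen. Multiplying by $O(\max(m,n)s^2)$ gives $O(\max(m,n)s^4H)$, completing the estimate. The one point demanding care is the denominator bound: one must observe that the least common denominator of a $k\times k$ rational minor divides the product of its $k^2$ entry-denominators, so it contributes height at most $s^2H$ rather than a quantity growing with the depth of the elimination; this is what keeps the rational bound at a single extra factor of $s$ over the integer one.
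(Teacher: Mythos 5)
Your argument is correct and follows essentially the same route as the paper: Kronecker--Capelli reduces solvability to two rank computations, fraction-free (Edmonds/Bareiss-style) elimination keeps every intermediate quantity equal to a minor (or quotient of minors) of the input, and a Hadamard-type size bound combined with Schonhage--Strassen multiplication gives the per-operation cost $O(\min(m,n)(H+\log_2\min(m,n)))$ in the integer case. The only divergence is in the rational case, where the paper clears denominators column-by-column up front (raising the height to $O(\min(m,n)H)$) and then invokes the integer bound, whereas you bound the heights of the rational intermediates directly as quotients of minors over a common denominator of height $O(\min(m,n)^2H)$; both variants yield the same extra factor of $\min(m,n)$.
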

 \begin{proof}
 We first assume that $A$ and $\bb$ have integer entries.
 We compute ranks of $A$ and $\hat A$ and use the Kronecker-Capelli theorem.
 By considering $A\trans, \hat A\trans$, if needed, we will assume that $n\ge m$.
 We perform the standard Gauss elimination on $A$, without normalizing the pivots, as in \cite{Edm67},  \cite[\S3.3]{Sch} or \cite[\S1.3.2]{FA18}.
 Let $D_k$ be the determinant of the $k\times k$  submatrix of $A$ that contains the $k$ pivots.  Then $|D_k|\le 2^{kH} k!$.  Hence $\log_2 |D_k|=O(k(H+\log_2 k))$.
The main observation is that the value of $k$-th pivot is the ratio of the corresponding $D_k/D_{k-1}$, where $D_0=1$.   Hence the height of $k$-th pivot is $O(k(H+\log k))$.
The Schonhage-Strassen algorithm for multiplying two integers by this height is $O(k(H+\log _2k))$.  As $k\le m$, we get that the storage of all entries is $O(nm^2(H+\log_2 m))$.  As we need $O(nm^2)$ flops we deduce that the total complexity is $O(nm^3(H+\log_2 m))$.

Assume now that $A$ and $\bb$ have rational entries. Then we multiply each nonzero column of $A$ by the product of the numerators of the entries of this column to obtain $A_1\in \Z^{m\times n}$.  Similarly we obtain $\bb_1\in \Z^m$.  Clearly $\rank A_1=\rank A$ and $\rank \hat A_1=\rank \hat A$.  Observe that $H_1$, the height of $\hat A_1$ is $O(mH)$.  Hence the complexity of determining solvability of \eqref{syslineq} is $O(nm^3(mH+\log_2 m))=O(nm^4 H)$.
\end{proof}
\section{Polynomial equations of finite field}
 Let $\F_q$ be a finite field with $q=p^l$ elements, where $p$ is a prime number and $l$ a positive integer.  Assume that $\cT\in \F_q^{\n}$.  
 Then $r\ge \rank \cT$ over $\F_q$ if and only if the system of polynomial equations \eqref{poleqgenten1} is solvable.  A brute force method by checking all possible values of $M(r,\n)$ variables is $O(q^{M(r,\n)})$.
 In a recent paper \cite{LPTWY} a somewhat better result is given.  Namely, there is a randomized  algorithm of running time $O(q^{(1-\delta(r,p,l))M(r,\n)})$ with high probability finding a solution of the system if it is solvable. For $q=2$ one has that $\delta(r,2,1)\approx 0.1135$.  For $q\le 2^{4erl}$ one has that $\delta(r,p,l)=O(1/r)$.
Otherwise, $\delta(r,p,l)=\frac{l\log(\log(q/4erl))}{\log q}$.  
 
 Note that for our method we need to determine the solvability of the system of linear equations over $\F_q$.  Thus the storage needed for each entry is $O(\log_2 q)$.  The complexity of multiplying using two elements in $\F_q$ is $O(q)$ ignoring the logarithmic factors.
 Hence the complexity for determining the solvability of system of $m$ equations in $n$ unknowns using Gauss elimination is $O(\max(m,n)\min(n,m)^2 q)$.
 Thus for $q>d^{3M(r,\n)}$
 our complexity for finding if rank$_{\hat\F}\cT>r$ is comparable or better than the complexity of finding if rank$_{\F_q}\cT>r$.  
 
 Similar results hold for symmetric tensors.  Assume that $\cS\in\rS^d\F_q^n$. Then srank$_{\F_q}\cS>r$ if and only if the following system of ${n+d-1\choose d}$ polynomial equations is not solvable over $\F_q$: 
 \begin{eqnarray}\label{poleqsymten2}
\sum_{i=1}^r t_i\x_i^{\otimes d}-\cS=0,
\end{eqnarray}
where $t_i\in\F_q, \x_i\in\F_q^n$ for $i\in[r]$.  Note that the total number of variables  is $(r+1)n$.

\section{Open problems}
The first major problem is  {whether we can} reduce the number of monomials appearing in $g_{i_1,\ldots,i_d}$.  The insight behind this problem is that each $f_{i_1,\ldots,i_d}$ consists of a constant term and $-\cT_{i_1,\ldots,i_d}$ and sum of $r$ multilinear monomials which are invariant under the permutation of the $r$ vectors $\{\x_{j,1},\ldots,\x_{j,r}\}\to \{\x_{j,\sigma(1)},\ldots,\x_{j,\sigma(r)}\}$ for $j\in[d]$, and $\sigma:[d]\to[d]$.  The second problem; is it true that  each monomial of $g_{i_1,\ldots,i_d}$ is a monomial in the entries of rank-one tensors $\otimes_{j=1}^d x_{j,i}$? Further investigation along those lines could prove to be worthwhile.

\section*{Acknowledgment}
The work of the second author is partially supported by the Simons Collaboration Grant for Mathematicians.


\begin{thebibliography}{MMM}
\bibitem{BAPHA} M. W. Bradley, K. A. Aiello, S. P. Ponnapalli, H. A. Hanson and O. Alter, "GSVD- and Tensor GSVD-Uncovered Patterns of DNA Copy-Number Alterations Predict Adenocarcinomas Survival in General and in Response to Platinum," Applied Physics Letters (APL) Bioengineering 3 (3), article 036104 (August 2019); doi: 10.1063/1.5099268.
\bibitem{Bro87} W. D. Brownawell, Bounds for the degrees in the Nullstellensatz, Ann. Math. 126 (1987), 577--591.
\bibitem{Bro}W. D. Brownawell, A pure power product version of the Hilbert Nullstellensatz, \emph{Mich. Math. J.} 45 (1998), no. 3, 581--597. 

 \bibitem{BFZ} W.~Bruzda, S.~Friedland and K.~{\.Z}yczkowskiTensor rank and entanglement of pure quantum states, arXiv:1912.06854.

\bibitem{BCS97} P. B\"{u}rgisser, M. Clausen, A. Shokrollahi, Algebraic Complexity Theory, \emph{Springer-Verlag}, Berlin (1997).

\bibitem{Edm67} J.~Edmonds,  Systems of distinct representatives and linear algebra, {\em J. Res. Nat. Bur. Standards Sect.} B 71B (1967), 241-245.

\bibitem{CGLM}
P. Comon, G. Golub, L. H. Lim, B. Mourrain, Symmetric tensors and symmetric tensor rank, {\it SIAM Journal on Matrix Analysis and Applications} 30(2008) 1254--1279.

\bibitem{EL99} L. Ein and R. Lazarsfeld,  A geometric effective Nullstellensatz, \emph{Invent. Math.} 137 (1999), no. 2, 427--448.

 \bibitem{Fri12} S. Friedland, On the generic and typical ranks of 3-tensors,  \emph{Linear Algebra Appl.}, 436 (2012), 478--497.

 \bibitem{Fri16} S. Friedland, Remarks on the symmetric rank of symmetric tensors, \emph{SIAM J. Matrix Anal. Appl.} 37, No. 1 (2016), 320--337.

\bibitem{FA18}
S. Friedland and  M. Aliabadi, Linear Algebra and Matrices, SIAM, Philadelphia, PA, 2018.

\bibitem{FL18}
S. Friedland and L. H. Lim, Nuclear Norm of Higher-Order Tensors, {\it Mathematics of Computation}, 87(311), 1255-1281 (2018).

\bibitem{FS13} S. Friedland and M. Stawiska, Best approximation on semi-algebraic sets and k-border rank approximation of symmetric tensors,  arXiv:1311.1561.
 
 \bibitem{FW18}  S. Friedland and L. Wang, Spectral norm of a symmetric tensor and its computation,  \emph{Mathematics of Computation} 89 (2020),  2175--2215.

\bibitem{Has90} J. Hastad, Tensor rank is  NP-complete,  
Journal of Algorithms 11 (1990) 644--654.

\bibitem{Hit27}
F. L. Hitchcock, The expression of a tensor or a polyadic as a sum of products, {\it Journal of Mathematics and Physics} 6(1927) 164--189.


 \bibitem{Kol} J. K\'ollar, Sharp effective Nullstellensatz. J. Amer. Math. Soc., l (1988), 963--975.
 
 \bibitem{La12} J.M. Landsberg, {\it Tensors: Geometry and Applications}
American Mathematical Soc.,  Providence R.I. 2012.
 
 \bibitem{LC18} N. Lee and A.  Cichocki,  Fundamental tensor operations for large-scale data analysis using tensor network formats,
 {\em Multidim Syst Sign Process} 29 (2018),  921--960.
 
\bibitem{LPTWY} D.~Lokshtanov, R.~ Paturi, S.~Tamaki,  R.~Williams, H.~Yu, 
Beating Brute Force for Systems of Polynomial Equations over Finite Fields, 
Proceedings of the Twenty-Eighth Annual ACM-SIAM Symposium on Discrete Algorithms, 2017, 2190--2202, 10.1137/1.9781611974782.143.


 
 \bibitem{Nie17} J. Nie, Generating Polynomials and Symmetric Tensor Decompositions, Found Comput Math (2017) 17:423-465.
 
 \bibitem{aa}
 L. Oeding, G. Ottaviani, Eigenvectors of tensors and algorithms for Waring decomposition, {\it Journal of Symbolic Computation} 54(2013) 9--35.
 
 \bibitem{Rez92} B. Reznick, Sums of even powers of real linear forms, \emph{Mem. Amer. Math. Soc.}, 96 (1992), Number 463.

 \bibitem{SS18}
 M. Schaefer, D. Stefankovic, The Complexity of Tensor Rank, Theory Comput Syst (2018) 62: 1161--1174.
 
 \bibitem{Sch} A. Schrijver, {\em Theory of Linear and Integer Programming},  Wiley, 1998.
 
 \bibitem{Shi16} Y. Shitov, How hard is the tensor rank?,  arXiv:1611.01559.

\bibitem{Shi17} Y. Shitov, A counterexample to Comon's conjecture, SIAM J. Appl. Algebra Geometry, 2(3) (2018), 428-443.

\bibitem{Str69} V.~Strassen, Gaussian elimination is not optimal,
\emph{Numer. Math.} 13 (1969), 354–356. 

\bibitem{Str87}
V.~Strassen, Relative bilinear complexity and matrix multiplication,{\em J. Reine Angew. Math.}, 375/376 (1987), 406--443.
 
 
 \bibitem{ZHQ16} X. Zhang, Z. H. Huang, L. Qi, Comon’s Conjecture, Rank Decomposition, and Symmetric Rank Decomposition of Symmetric Tensors, SIAM J. Matrix Anal. A. 37.4 (2016),1719--1728.
\end{thebibliography}
 \end{document}